\newtheorem{theorem}{Theorem}[section]
\newtheorem{corollary}[theorem]{Corollary}
\newtheorem{remark}[theorem]{Remark}
\newtheorem{proposition}[theorem]{Proposition}
\newtheorem{ex}[theorem]{Example}
\newcommand{\cb}    [1]{\ensuremath{\left  \{      #1  \right \}       }}
\newcommand{\of}    [1]{\ensuremath{\left (        #1  \right )        }}
\newcommand{\Span} {{\rm span \,}}
\newcommand{\cl}  {{\rm cl  \,}}
\newcommand{\Int} {{\rm int \,}}
\newcommand{\conv}  {{\rm conv \,}}
\newcommand{\cone}{{\rm cone\,}}
\newcommand{\ri}{{\rm ri\,}}
\newcommand{\D}{\mathcal{D}}
\renewcommand{\P}{\mathcal{P}}
\newcommand{\R}{\mathbb{R}}
\renewcommand{\subset}{\subseteq}
\newcommand{\smz}{\!\setminus\!\{0\}}
\newcommand{\dir}{^{\rm dir}}
\newcommand{\poi}{^{\rm poi}}
\newcommand{\lin}{^{\rm lin}}
\author{Andreas L\"{o}hne\thanks{Martin-Luther-Universit{\"a}t Halle-Wittenberg, Department of Mathematics, 06099 Halle (Saale), Germany andreas.loehne@mathematik.uni-halle.de}}
\title{Projection of polyhedral cones and linear vector optimization}
\begin{document}
\maketitle

\begin{abstract} Consider a polyhedral convex cone which is given by a finite number of linear inequalities. We investigate the problem to project this cone into a subspace and show that this problem is closely related to linear vector optimization: We define a cone projection problem using the data of a given linear vector optimization problem and consider the problem to determine the extreme directions and a basis of the lineality space of the projected cone $K$. The result of this problem yields a solution of the linear vector optimization problem. Analogously, the dual cone projection problem is related to the polar cone of $K$: One obtains a solution of the geometric dual linear vector optimization problem. We sketch the idea of a resulting algorithm for solving arbitrary linear vector optimization problems and provide an alternative proof of the geometric duality theorem based on duality of polytopes. 	
\medskip

\noindent
{\bf Keywords:} multi-objective optimization, geometric duality, computation of polytopes, outer approximation algorithm 
\medskip

\noindent
{\bf MSC 2010 Classification:} 15A39, 52B55, 90C29, 90C05 

\end{abstract}

\section{Problem formulation and motivation}

Let $k,n,p$ be positive integers and let two matrices $G \in \R^{k \times n}$, $H\in \R^{k \times p}$ be given. We consider the problem to
\begin{equation}\label{P}
	\tag{P} \text{compute } K=\cb{y \in \R^p :\; G x + H y \geq 0}.
\end{equation}
A point $(x,y) \in \R^n \times \R^p$ is said to be {\em feasible} for \eqref{P} if it satisfies $G x + H y \geq 0$. A pair $(X\dir , X\lin)$ of two finite sets 
$$X\dir  =\{(x^1,y^1),\dots,(x^\alpha,y^\alpha)\},\qquad  X\lin =\{(x^{\alpha + 1},y^{\alpha +1}),\dots,(x^{\alpha + \beta}, y^{\alpha + \beta})\}$$
of feasible points
is called a {\em solution} to \eqref{P} if $\{y^{\alpha +1},\dots,y^{\alpha + \beta}\}$ is a basis of the lineality space $L:=\{y \in \R^p :\; G x + H y = 0\}$ of $K$ and $\cb{y^1,\dots, y^\alpha}$ is the set of extreme directions of $K \cap L^\bot$, where $L^\bot$ denotes the orthogonal complement of $L$.

We show in this note that every linear vector optimization problem can be expressed by a problem of type \eqref{P}. In the same manner, the dual problem to \eqref{P}, that is, 
\begin{equation}\label{D}
	\tag{P$^*$} \text{compute } K^*=\cb{w \in \R^p :\; w = -H^T u,\; G^T u = 0,\; u \geq 0}
\end{equation}
is related to the geometric dual \cite{HeyLoe08} of this linear vector optimization problem. Note that by Farkas's lemma, we have $K^* = K^\circ$, where $K^\circ:=\{w \in \R^p:\; \forall y \in K:\; w^T y \leq 0\}$ is the polar cone of $K$. Approaching linear vector optimization by a problem of type \eqref{P} has several advantages:
\begin{itemize}
	\item Even closely related to a linear vector optimization, \eqref{P} is easy to state and, in particular, free of any minimality notion. 
	\item A link between two areas is established: computation and approximation of polyhedral convex sets (see e.g. \cite{Bronstein08} for an overview) and solving and approximately solving linear vector optimization problems (see e.g. \cite{Loehne11, HamLoeRud13} and the references therein).
		\item The approach is useful for the development of (objective-space-based) algorithms for linear vector optimization problems: We show that an algorithm for problem (P) yields an algorithm for arbitrary linear vector optimization problems. On the one hand this leads to a simplification of known algorithms. On the other hand one can also cover cases which have not yet been considered in the literature: problems with empty interior of the ordering cone and problems where no minimal vertices of the image exist, see e.g. \cite{HamLoeRud13} and the references therein.
	\item In contrast to the original formulation in \cite{HeyLoe08}, geometric duality for linear vector optimization problems becomes more symmetric if it is considered in the framework of \eqref{P} and \eqref{D}. We obtain an alternative proof of the geometric duality theorem \cite{HeyLoe08}, which follows (similar to the alternative proof in \cite{Luc11}) from duality of polytopes. 
\end{itemize}

Throughout we denote by $\Int B$, $\cl B$, $\conv B$ and $\ri B$ the interior, closure, convex hull and relative interior of a set $B \subseteq \R^n$. We denote by $\cone B:= \cb{\lambda x :\; \lambda \geq 0,\; x \in \conv B}$ the convex cone generated by a set $B \subset \R^n$. If $C$ is a pointed convex cone (i.e. $C \cap -C = \cb{0}$ and $C = \cone C$), $\leq_C$ denotes the partial ordering induced by $C$, that is, $x \le_C y$ iff $y-x \in C$. A point $y \in \R^n$ is called {\em $C$-minimal} in a set $B \subseteq \R^n$ if $y \in B$ and $y \not\in B+C\smz$; $y$ is called {\em $C$-maximal} if it is $(-C)$-minimal. We denote by $0_+ B$ the recession cone (in particular, we set $0_+ \emptyset = \cb{0}$), by $L(B):= 0_+ B \cap -0_+ B$ the lineality space, and by $\dim B$ the dimension of a convex set $B\subseteq \R^n$.
By $\Span B$ we denote the linear hull of a set $B$; we define $\Span \emptyset := \cb{0}$. 
A convex set $B$ is said to be a {\em base} of a closed convex cone $C$ if $C= \cl\cone B$ and $\dim C = \dim B + 1$. For a matrix $P\in \R^{ q \times n}$ and a subset $X \subseteq \R^n$ we use the notation $P[X]:=\cb{P x :\; x \in X}$ and we set $\R^n_+:=\cb{x \in \R^n:\; x \geq 0}$. 

\section{Connection to linear vector optimization}

For positive integers $n,m,q$, let the matrices $A \in \R^{m\times n}$, $P \in \R^{q \times n}$, a vector $b \in \R^m$, and a non-trivial (i.e. $C \neq \cb{0}$) pointed polyhedral convex cone $C \subseteq \R^q$ be given. Consider the linear vector optimization problem
\begin{equation*}\label{VLP}
	\tag{VLP}	\text{min}_C Px \text{ s.t. } A x \geq b.
\end{equation*}
Its feasible set $S := \cb{x \in \R^n \mid Ax \geq b}$ is assumed to be nonempty. The set $\P:=P[S]+C$ is called the {\em upper image} of \eqref{VLP}. Let $Z \in \R^{q \times r}$ such that $C=\cb{y \in \R^q :\; Z^T y \geq 0}$. 

In the cone projection problem \eqref{P} as defined above, we set
\begin{equation}\label{eq_gh} 
	G=\begin{pmatrix} A \\ -Z^T P \\ 0\end{pmatrix} \in \R^{(m+r+1) \times n} \qquad H=\begin{pmatrix}	0 & -b \\ Z^T &0  \\ 0 &1 \end{pmatrix} \in \R^{(m+r+1) \times (q+1)}.
\end{equation}	
Then, the polyhedral convex cone $K=\cb{y \in \R^{q+1} :\; Gx + Hy \geq 0}$ in problem \eqref{P} is closely related to the upper image $\P$ of \eqref{VLP}. The following proposition shows that $\P$ is a base of the cone $K$. This base is unbounded as $C$ was assumed to be non-trivial. 

\begin{proposition}\label{p21}  
One has $K = \cl \cone (\P \times \cb{1})$.
\end{proposition}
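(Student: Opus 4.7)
The plan is to rewrite the defining system of $K$ in coordinates adapted to the last block of \eqref{eq_gh}. Split $y = (\bar y, \eta) \in \R^q \times \R$ and use $C = \cb{z \in \R^q :\; Z^T z \geq 0}$ to see that $Gx + Hy \geq 0$ is equivalent to the three conditions $Ax \geq b\eta$, $\bar y - Px \in C$, and $\eta \geq 0$. Projecting out $x$ via Fourier--Motzkin yields
\[
	K = \cb{(\bar y, \eta) \in \R^q \times \R_+ :\; \exists\, x \in \R^n,\; Ax \geq b\eta,\; \bar y \in Px + C},
\]
which is a closed convex polyhedral cone, so it suffices to match $K$ with $\cl \cone (\P \times \cb{1})$ on the two strata $\eta > 0$ and $\eta = 0$.

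For $\eta > 0$ I would rescale: setting $\tilde x := x/\eta$, the condition $(\bar y,\eta) \in K$ becomes $\tilde x \in S$ and $\bar y / \eta \in P\tilde x + C \subseteq \P$, hence $(\bar y, \eta) = \eta\, (\bar y/\eta,\, 1) \in \cone(\P \times \cb{1})$; conversely every element of $\cone(\P \times \cb{1})$ with $\eta > 0$ is of this form. For $\eta = 0$ the $K$-fibre equals $\cb{\bar y :\; \exists x,\; Ax \geq 0,\; \bar y \in Px + C} = P[0_+ S] + C = 0_+ \P$, using $0_+\P = P[0_+S] + 0_+ C$ and $0_+ C = C$ (polyhedrality of $S$ and $C$). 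The matching fibre of $\cl \cone (\P \times \cb{1})$ at $\eta = 0$ consists of all limits $\lim_k \lambda_k(p_k, 1)$ with $\lambda_k \downarrow 0$ and $p_k \in \P$, which for the polyhedron $\P$ is precisely $0_+ \P \times \cb{0}$; so the two cones agree on this stratum as well, and the proposition follows.

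The main obstacle is the stratum $\eta = 0$: because $\P$ is unbounded (the base is stated to be unbounded), the closure genuinely enlarges $\cone(\P \times \cb{1})$ there, and one must verify that it produces exactly the horizon $0_+ \P \times \cb{0}$ and nothing more. This is where polyhedrality of $\P$ is essential, via the standard fact that the recession cone of a polyhedron captures all such limits; the positive-stratum equality is then a routine scaling argument, and closedness of $K$ (as a polyhedral projection) closes the chain of inclusions.
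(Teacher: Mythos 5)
Your proof is correct and takes essentially the same route as the paper: the paper's one-line argument cites exactly the three facts you establish, namely the identity $\cl\cone(\P\times\cb{1}) = \cone(\P\times\cb{1}) \cup (0_+\P\times\cb{0})$ from Rockafellar, the description of $\P$ as the projection of the system $Ax\geq b$, $Z^TPx \leq Z^T\bar y$, and the corresponding homogeneous description of $0_+\P$. You merely spell out the stratification by the last coordinate and the scaling argument that the paper leaves implicit.
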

\begin{proof}
The statement follows immediately from the facts $\cl\cone (\P \times \cb{1}) =  \cone(\P \times \cb{1}) \cup (0_+\P \times \cb{0})$ (compare \cite[Theorem 8.2]{Rockafellar72}), $\P=\{y \in \R^q:\; Ax \geq b,\; Z^T P x \leq Z^T y\}$,‚® and $0_+ \P=\{y \in \R^q:\; Ax \geq 0,\; Z^T P x \leq Z^T y\}$.  
\end{proof}

Let us turn to the dual problems. We fix some vector $c \in \ri C \times \cb{0} \subseteq \R^{q+1}$. Without loss of generality we assume (note that $c \neq 0$ since $C$ is pointed; if necessary, scale and permute coordinates)
\begin{equation}\label{ass_c}
   c_q=1.
\end{equation}
Of course, $c$ is orthogonal to the vector $ c^* := (0,\dots,0,-1) \in \R^{q+1}$. Note that the last row of $H$ is just $-c^*$ and that the last two components of a $(q+1)$-dimensional vector play a kind of extraordinary role. Throughout we use the projections
\begin{align*}
    &p\;\,:\; \R^{q+1} \to \R^q,\quad p(y) := (y_1,\dots,y_q)^T, \\
    &p^*:\; \R^{q+1} \to \R^q,\quad p^*(w) := (w_1,\dots,w_{q-1},w_{q+1})^T.
\end{align*}
A point $\bar y \in \R^q$ is said to be a {\em relatively $C$-minimal} point of $\P$ if $\bar y \in \P$, $\bar y \not\in \P+\ri C$. This notion turned out to be useful \cite{Heyde11} in order to generalize the duality results of \cite{HeyLoe08} to the case of ordering cones with empty interior. We do not use this concept in the following, because it can be replaced by minimality with respect to the ordering cone \begin{equation}\label{eq_R}
 R:=\cone\cb{p(c)},
\end{equation} 
whenever the upper image $\P$ (which involves $C$ as $\P=P[S]+C$) is considered.

\begin{proposition} Let $C$ be a pointed convex cone, $\bar c \in \ri C$ and $R:=\cone\cb{\bar c}$. The following statements are equivalent:
\begin{enumerate}[(i)]
		\item $y$ is a relatively $C$-minimal point of $\P$
		\item $y$ is an $R$-minimal point of $\P$ 
\end{enumerate} 	
\end{proposition}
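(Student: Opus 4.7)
The plan is to unpack both notions: $y$ fails (i) iff there exist $y'\in\P$ and $d\in\ri C$ with $y=y'+d$; $y$ fails (ii) iff there exist $y''\in\P$ and $\lambda>0$ with $y=y''+\lambda\bar c$. Throughout I will use the key property $\P+C=\P$, which holds because $\P=P[S]+C$ and $C+C=C$ for a convex cone.

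For the direction $(i)\Rightarrow(ii)$, I would argue by contraposition. Suppose $y$ is not $R$-minimal, so $y=y'+\lambda\bar c$ with $y'\in\P$ and $\lambda>0$. Since $\bar c\in\ri C$ and the relative interior of a convex cone is invariant under positive scaling (the map $u\mapsto\lambda u$ is a linear bijection of $\aff C$ which preserves relative interior), we get $\lambda\bar c\in\ri C$, hence $y\in\P+\ri C$, contradicting (i).

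For the converse $(ii)\Rightarrow(i)$, again by contraposition, assume $y$ is not relatively $C$-minimal, so $y=y'+d$ with $y'\in\P$ and $d\in\ri C$. The key observation is that one can subtract a small positive multiple of $\bar c$ from $d$ and stay inside $C$: since $0\in C$, $\aff C$ is a linear subspace containing $\bar c$, so $d-\lambda\bar c\in\aff C$ for every $\lambda$. Because $d\in\ri C$, there is a neighbourhood of $d$ in $\aff C$ contained in $C$, hence for some $\lambda>0$ we have $d-\lambda\bar c\in C$. Setting $y'':=y'+(d-\lambda\bar c)\in\P+C=\P$ yields $y=y''+\lambda\bar c$ with $\lambda>0$, so $y$ is not $R$-minimal.

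The only non-routine ingredient is the little argument that $\ri C$ contains a whole open segment from $d$ towards $0$ along the direction $\bar c$; everything else is definition chasing plus the identity $\P+C=\P$. There is no genuine obstacle, just the need to be careful that $\ri C$ is a cone (without $0$) in the first implication and that $\aff C$ is a linear, not merely affine, subspace in the second.
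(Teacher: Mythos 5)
Your proof is correct and follows essentially the same route as the paper: both arguments reduce to the set identity $\P + \ri C = \P + R\smz$, with the easy inclusion coming from $R\smz \subseteq \ri C$ (positive scaling invariance of $\ri C$) and the harder one from splitting a small positive multiple of $\bar c$ off an element of $\ri C$ and absorbing the remainder via $\P + C = \P$. The only difference is presentational: the paper packages that perturbation step as the relative-interior identity $\ri C = \ri(C+R) = \ri C + \ri R$, whereas you prove it directly from the definition of $\ri C$ as a set relatively open in the subspace $\aff C$.
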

\begin{proof} We have $\bar c \neq 0$ as $C$ is pointed, hence $R\smz = \ri R \subseteq \ri C$. Thus $C + \ri C = C + \ri(C + R) = C + \ri C + \ri R \subseteq C + \ri R = C + R\smz \subseteq C + \ri C$. Since $\P = \P + C$, we conclude $\P + \ri C = \P + C + \ri C = \P + C + R\smz = \P + R\smz$, which implies the statement.	
\end{proof}

Likewise to \eqref{eq_R}, we introduce an ordering cone for a dual problem as
\begin{equation*}
	R^* := \cone\cb{-p^*(c^*)}.
\end{equation*}
As we already fixed $c^* = (0,\dots,0,-1) \in \R^{q+1}$, we obtain 
$$R^*=\cb{v \in \R^q:\; v_1=\dots=v_{q-1}=0, v_q\geq 0}.$$
The (geometric) dual problem, introduced in \cite{HeyLoe08}, is
\begin{equation*}\label{VLP_star}
\tag{VLP$^*$} \text{max}_{R^*} D(u,w) \text{ s.t. } A^T u = P^T w,\; p(c)^T w = 1,\; w \in -C^\circ,\;u \geq 0,
\end{equation*}
with (linear) objective function $D:\R^m\times\R^q \to \R^q$, $D(u,w):=\of{w_1,...,w_{q-1},b^T u}^T$.
The feasible set of \eqref{VLP_star} can be expressed (see \cite{HamLoeRud13}) as
\[ T:=\cb{(u,w)\in \R^m \times \R^q :\; w = Z v,\; A^T u = P^T w,\; p(c)^T w = 1,\; u \geq 0,\; v \geq 0}.\] 
The set $\D= D[T]-R^*$ is called {\em lower image} of \eqref{VLP_star}. The bi-affine function
\[ \varphi:\; \R^q\times\R^q \to \R,\quad \varphi(y,w):=\sum_{i=1}^{q-1} y_i w_i + y_q \of{1-\sum_{i=1}^{q-1} c_i w_i} - w_q\]
is used to define the duality map
\[\Psi: 2^{\R^q} \to 2^{\R^q}, \quad\Psi( F^*):=\bigcap_{w \in  F^*}  \cb{y \in \P :\; \varphi(y,w) = 0}.\]
The following geometric duality theorem has been proven in \cite{HeyLoe08} for the case $C=\R^q_+$. An extended version similar to the following one can be found in \cite{Heyde11}.
Recall that a convex subset $F$ of a convex set $B\subseteq \R^p$ is called a {\em face} of $B$ if
\begin{equation*}
	 (y,z \in B \;\wedge\; \lambda\in (0,1) \;\wedge\; \lambda y + (1-\lambda)z \in F) \implies y,z \in F.
\end{equation*}
A face $F$ of $B$ satisfying $\emptyset \neq F \neq B$ is called {\em proper}. A face $F$ of $\P$ is said to be {\em $R$-minimal} if all points $y \in F$ are $R$-minimal in $\P$. {\em $R^*$-maximal} faces of $\D$ are defined likewise. 

\begin{theorem}[Geometric duality theorem]\label{gd}
$\Psi$ is an inclusion reversing (i.e., $F^*_1 \subseteq F^*_2 \Leftrightarrow \Psi(F^*_1)\supseteq \Psi(F^*_2)$) one-to-one map between the set of all {$R^*$-maximal} proper faces of $\D$ and the set of all {$R$-minimal} proper faces of $\P$. The inverse map is
\begin{equation*}
 \Psi^{-1}(F)=\bigcap_{y \in  F}  \cb{w \in \D:\; \varphi(y,w)=0}.
\end{equation*}
Moreover, if $F^*$ is an $R^*$-maximal proper face of $\D$, then
\[ \dim  F^* + \dim \Psi( F^*) = q-1.\]
\end{theorem}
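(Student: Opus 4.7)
The plan is to deduce Theorem~\ref{gd} from the classical polar duality between the polyhedral cone $K$ and its polar $K^\circ = K^*$, building on Proposition~\ref{p21} ($\P$ is a base of $K$) together with an analogous representation of $\D$ as a base of $K^*$.

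The first step is to establish this dual counterpart. Set $\hat y := (y,1)$ and $\hat w := (w_1,\dots,w_{q-1},\,1-\sum_{i=1}^{q-1}c_i w_i,\,-w_q)$. A direct calculation from the description of $T$ (exploiting $w = Zv$, $A^T u = P^T w$, and the normalization $p(c)^T w = 1$ together with $c_q=1$, which forces the inserted middle coordinate of $\hat w$ to equal $w_q$) shows that $-\hat w \in K^*$ for every $w \in D[T]$. Incorporating the $R^*$-slack then yields: the map $w \mapsto -\hat w$ embeds $\D$ as a base of $K^*$ lying in the hyperplane $\{z \in \R^{q+1} : c^T z = -1\}$. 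The second key observation, verified directly, is the identity $\varphi(y,w) = \hat y^T \hat w$. Since $\hat y \in K$ and $-\hat w \in K^*$, this gives $\varphi(y,w) \leq 0$ on $\P \times \D$, and $\varphi(y,w) = 0$ encodes precisely the support-hyperplane relation that underlies polar duality of faces.

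Classical polar duality of polyhedral cones then yields an inclusion-reversing bijection $\tilde F \mapsto \tilde F^\diamond := \{w \in K^* : w^T y = 0 \text{ for all } y \in \tilde F\}$ between the proper faces of $K$ and those of $K^*$, with $\dim \tilde F + \dim \tilde F^\diamond = q+1$. Each proper face of $K$ either meets $\R^q \times \{1\}$---in which case it equals $\cl\cone(F \times \{1\})$ for a unique proper face $F$ of $\P$ and has $\dim \tilde F = \dim F + 1$---or lies entirely in $0_+\P \times \{0\}$; the analogous dichotomy holds on the $K^*$ side with the base hyperplane $\{c^T z = -1\}$. The crux is to show that a base-type face $F$ of $\P$ is $R$-minimal iff its polar $\tilde F^\diamond$ is again base-type on the $\D$ side, and symmetrically for $R^*$-maximality (using $c^T c^* = 0$). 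Once this is established, the formulas for $\Psi$ and $\Psi^{-1}$ read off directly from biduality $\tilde F = \{y \in K : w^T y = 0 \text{ for all } w \in \tilde F^\diamond\}$ combined with $\varphi = \hat y^T \hat w$, and the dimension identity $\dim F + \dim F^* = q-1$ follows from $(\dim F + 1) + (\dim F^* + 1) = q+1$.

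The main obstacle is this equivalence between $R$-minimality and base-type of the polar face. Lifted to the cone, $R$-minimality of $F$ at a relatively interior point $\bar y$ becomes ``$\hat{\bar y} - \mu c \notin K$ for every small $\mu > 0$''; a short argument using $\alpha^T c \leq 0$ for all $\alpha \in K^*$ (which holds because $c \in K$) reduces this to the condition $\tilde F^\diamond \not\subseteq c^\perp$, which in turn is equivalent to $\tilde F^\diamond$ meeting the base hyperplane $\{c^T z = -1\}$. The delicate case $\Int C = \emptyset$ is handled via the proposition preceding Theorem~\ref{gd}, which legitimizes $R$-minimality as the appropriate substitute for relative $C$-minimality so that the characterization goes through uniformly.
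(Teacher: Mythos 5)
Your proposal is correct and follows essentially the same route as the paper: homogenize $\P$ and $\D$ into the mutually polar cones $K$ and $K^*$ (your map $w\mapsto-\hat w$ is exactly the paper's $M(\cdot\times\cb{1})$ from Proposition~\ref{p22}), invoke polar/polytope duality of faces with the dimension count $\dim \tilde F + \dim \tilde F^\diamond = q+1$ (Proposition~\ref{pgamma}), identify $\varphi(y,w)=\hat y^T\hat w$ to recover $\Psi$ as $\Phi^{-1}\circ\Gamma^{-1}\circ M\circ\Phi$, and characterize $R$-minimality ($R^*$-maximality) by the polar face not lying in $c^\perp$ (resp.\ $(c^*)^\perp$), which is precisely the content of Propositions~\ref{pr_41} and~\ref{pr_42}. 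No substantive differences to report.
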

A proof will be given in Section \ref{sec_proofs}. Using the vector $c$, which is involved in the dual problem \eqref{VLP_star}, we define the regular matrix 
\begin{equation}\label{eq_m}
	 M:=\begin{pmatrix} \!-1         &         & 0       & 0      & 0       \\
                          			 & \ddots  &         & \vdots  & \vdots           \\
									 0         &         & \!-1      & 0      & 0       \\
					                 c_1       & \dots   & c_{q-1} & 0      & \!-1      \\
					                 0         & \dots   & 0       & 1      & 0 \end{pmatrix} \in \R^{(q+1)\times(q+1)}.
\end{equation}

\begin{proposition}\label{p22} One has $K^* =  \cl \cone M(\D \times \cb{1})$.
\end{proposition}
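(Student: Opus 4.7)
I would mirror the strategy of Proposition~\ref{p21}. Since $\D$ is closed convex, Rockafellar's homogenisation formula \cite[Theorem~8.2]{Rockafellar72} gives
\[
\cl\cone(\D\times\cb{1}) = \cone(\D\times\cb{1}) \cup (0_+\D \times\cb{0}),
\]
and since $M$ from \eqref{eq_m} is invertible, hence a linear homeomorphism, commuting $\cl$ with $M$ yields
\[
\cl\cone M(\D\times\cb{1}) = M\bigl(\cone(\D\times\cb{1})\bigr) \cup M\bigl(0_+\D\times\cb{0}\bigr).
\]
It remains to identify the right-hand side with $K^*$.

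First I parametrise $K^*$. Decomposing $u = (u^1, v, \lambda) \in \R^m_+ \times \R^r_+ \times \R_+$ according to the row-blocks of $G$ and $H$ in \eqref{eq_gh}, the condition $G^T u = 0$ reads $A^T u^1 = P^T Z v$ and $w = -H^T u$ reads $w = (-Zv,\, b^T u^1 - \lambda)$, giving
\[
    K^* = \cb{(-Zv,\, b^T u - \lambda) \st u, v, \lambda \geq 0,\; A^T u = P^T Zv}.
\]

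Next I compute $M(d,\tau)$ for $d = (w_1,\ldots,w_{q-1},\, b^T u - s) \in \R^q$ arising from $(u,w) = (u,Zv) \in T$ (if $\tau = 1$) or $(u,w) \in 0_+T$ (if $\tau = 0$), together with $s \geq 0$. Reading $M$ row by row: rows $1,\ldots,q-1$ yield $-w_i$, row $q+1$ yields $b^T u - s$, and the delicate row $q$ yields $\sum_{j=1}^{q-1} c_j w_j - \tau$. Using $c_q = 1$ from \eqref{ass_c} together with $p(c)^T w = \tau$ (which holds in $T$ for $\tau=1$ and in $0_+T$ for $\tau = 0$), row $q$ collapses to $-w_q$. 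Thus both $M(\D\times\cb{1})$ and $M(0_+\D\times\cb{0})$ have the uniform form $(-Zv,\, b^T u - s)$ with $u,v,s \geq 0$, $A^T u = P^T Z v$, and the normalisation $p(c)^T Z v = \tau$. Taking $\cone$ on the $\tau = 1$ piece relaxes the normalisation to $p(c)^T Z v > 0$; unioning with the $\tau = 0$ piece gives $p(c)^T Z v \geq 0$.

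Finally, the condition $p(c)^T Z v \geq 0$ is automatic for $v \geq 0$: the defining identity $C = \cb{y \st Z^T y \geq 0}$ means every column of $Z$ lies in $C^*$, so $Zv \in C^*$; and $c \in \ri C \times \cb{0}$ forces $p(c) \in \ri C \subseteq C$, whence $p(c)^T Z v \geq 0$. Comparing with the parametrisation of $K^*$ then closes the proof. I expect no conceptual difficulty, only the careful bookkeeping of row $q$ of $M$; the one subtle point is that the boundary stratum $p(c)^T Z v = 0$, which a bare $\cone$ would miss, is exactly what the $0_+\D \times \cb{0}$ piece of Rockafellar's formula supplies.
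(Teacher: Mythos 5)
Your proof is correct and takes essentially the same route as the paper's: parametrize the relevant sets from the block structure of $G$ and $H$, apply $M$ using $c_q=1$ and $p(c)^T w=\tau$ to collapse row $q$ to $-w_q$, note that $p(c)^T Z v\geq 0$ for $v\geq 0$ since $p(c)\in C$, and invoke the homogenization formula \eqref{eq_phi} (with $M$ a coordinate transformation). You merely spell out the bookkeeping that the paper leaves implicit.
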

\begin{proof}
We have $\D \times \cb{1} = \{(w_1,\dots,w_{q-1}, t, 1)^T :\; w = Z v,\; t \leq b^T u,\; A^T u = P^T w,\; p(c)^T w = 1,\; u \geq 0,\; v \geq 0\}$.  Using the assumption $c_q = 1$ in \eqref{ass_c}, we obtain $M(\D \times \cb{1}) = \{(w,t) \in \R^{q+1} :\; w = - Z v,\; t \leq b^T u,\; A^T u = - P^T w,\; p(c)^T w = -1,\; u \geq 0,\; v \geq 0\}$. Since $p(c) \in C$, it follows $p(c)^T w \leq 0$ for all $w \in C^\circ = \cb{w \in \R^q:\; w = -Z v,\; v \geq 0}$. Using the fact $\cl\cone M(\D\times\cb{1}) = \cone M(\D\times\cb{1}) \cup M(0_+\D\times\cb{0})$ (compare \eqref{eq_phi} and take into account that $M$ is just a coordinate transformation), we obtain $K^* =  \cl \cone M(\D \times \cb{1})$.
\end{proof}

Let us illustrate the geometric duality relation as well as the main idea of relating \eqref{VLP} and \eqref{VLP_star} to \eqref{P} and \eqref{D} by an example. Consider problem \eqref{VLP} with the data  
$$P = Z = \begin{pmatrix} 1 & 0 \\ 0 & 1 \end{pmatrix} \quad A=\begin{pmatrix} 2 & 1 & 1 & 0\\ 1 & 2 & 0 & 1\end{pmatrix}^T \quad b = \begin{pmatrix}1 & 1 & 0 & 0\end{pmatrix}^T,$$
and let $p(c)=(1,1)^T$. Figure \ref{f1} shows the upper image $\P$ of \eqref{VLP} and the lower image $\D$ of \eqref{VLP_star}, both are subsets of $\R^2$. In Figure \ref{f2}, we see that $\P \times \cb{1}$ is an unbounded base of a cone $K$ and $\D \times \cb{1}$ is, after an appropriate linear transformation, an unbounded base of $K^*$. 

\begin{figure}[t]
\begin{center}
\input{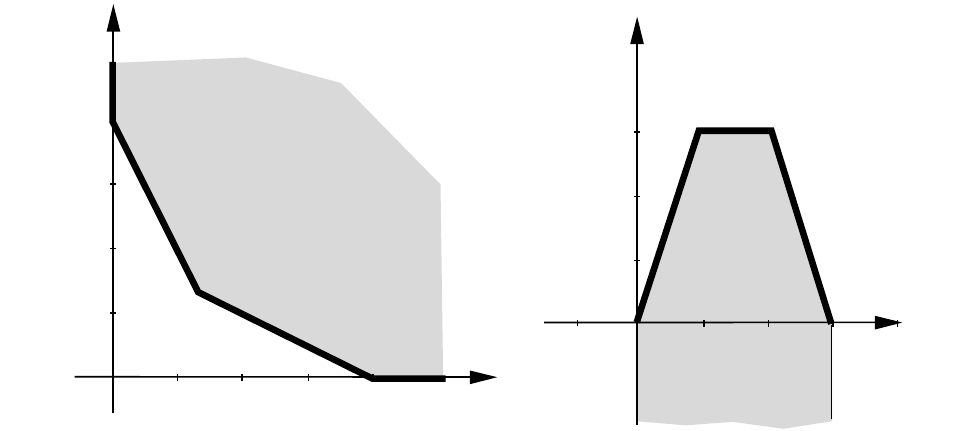_t}
\end{center}
\caption{Geometric duality: The four vertices of $\D$ (which are $R^*$-maximal, $R^*=\cone\!\cb{(0,1)^T}$) correspond via the duality map $\Psi$ to the four facets (edges in this example) of $\P$ (which are $R$-minimal, $R=\cone\!\cb{(1,1)^T}$). Vise versa, the three vertices of $\P$ (which are $R$-minimal) correspond via the inverse duality map $\Psi^{-1}$ to the three bounded faces of $\D$ (only the bounded faces are $R^*$-maximal here).}
\label{f1}
\end{figure}

\begin{figure}[ht]
\begin{center}
\input{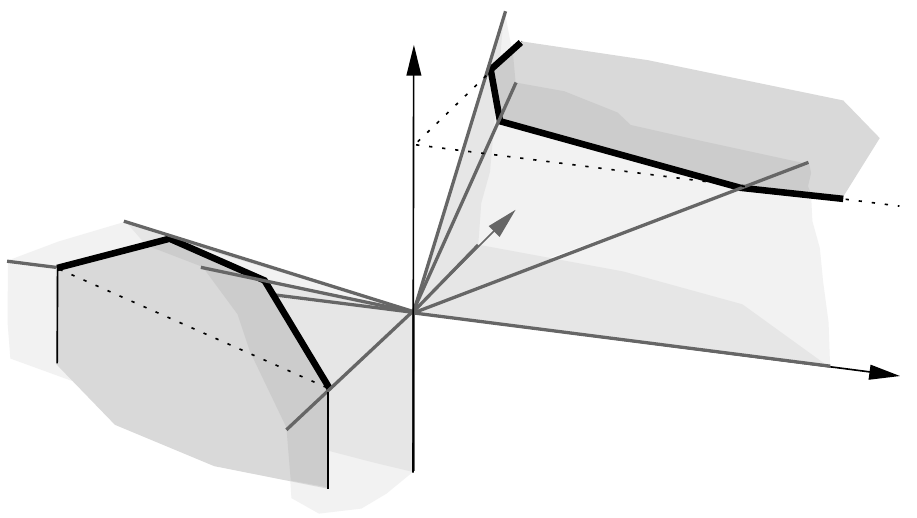_t}
\end{center}
\caption{$K$ and its dual cone $K^\circ = K^*$ relate the upper image $\P$ of the primal problem \eqref{VLP} to the lower image $\D$ of the dual problem \eqref{VLP_star}. $\P \times \cb{1}$ is an unbounded base of $K$. Likewise, $M(\D \times \cb{1})$, where $M$ represents a coordinate transformation, see \eqref{eq_m}, is an unbounded base of $K^*$. $\P$ and a bounded base of $K$ have essentially (if $\P$'s ``faces at infinity'' are added) the same facial structure and likewise for $\D$ and a bounded base of $K^*$.}
\label{f2}
\end{figure}


Let us recall the definition of a {\em solution}  to \eqref{VLP}, compare \cite{Loehne11, HeyLoe11, HamLoeRud13}. A point $\bar x \in S$ is said to be a {\em minimizer} for \eqref{VLP} if there is no $x \in S$ such that $P x \leq_C P \bar x$, $P x \neq P \bar x$, that is, $\bar x \in S$, $P \bar x \not \in P[S] + C\smz$. To adopt this concept to directions of $S$, we consider the recession cone $0_+ S = \cb{x \in \R^n:\; A x \geq 0}$ of the (nonempty) feasible set $S$. A direction $\bar x \in \R^n \smz$ of $S$ is called a {\em minimizer} for \eqref{VLP} if $\bar x \in (0_+ S) \smz$, $P \bar x \not \in P[0_+ S] + C\smz$. Let $L(S) =\cb{x \in \R^n:\; A x = 0}$ be the lineality space of $S$. The orthogonal complement of $L(S)$ is $L(S)^\bot = \cb{x \in \R^n:\; x = A^T y,\, y \in \R^m}$. A triple $(S\poi, S\dir , S\lin) \subseteq \R^n \times \R^n\smz \times \R^n\smz$ is called {\em feasible} if $S\poi \neq \emptyset$, $S\poi \subseteq S$, $S\dir  \subseteq 0_+S$, $S\lin \subseteq L(S)$. If $(S\poi, S\dir, S\lin)$ is feasible, if the sets $S\poi$, $S\dir $, $S\lin$ are finite and if
\begin{equation}\label{eq_infatt}
\conv P[S\poi] + \cone P[S\dir ] + \Span P[S\lin] + C = P[S] + C,
\end{equation}
then $(S\poi, S\dir , S\lin)$ is called a {\em finite infimizer} for \eqref{VLP}. A finite infimizer is called a {\em solution} to \eqref{VLP} if its three components consist of minimizers only.

\begin{remark}
Note that a solution in \cite{Loehne11, HamLoeRud13} only consists of a pair $(\bar S, \bar S^h)$ rather than a triple $(S\poi, S\dir , S\lin)$. If we replace each direction $\bar x \in S\lin$ by two directions $\bar x, -\bar x \in S\dir $, we can omit the third component $S\lin$ and obtain the relation $(\bar S, \bar S^h) = (S\poi,S\dir )$.
\end{remark}

In order to relate a solution of \eqref{P} to a solution of \eqref{VLP}, for a closed convex set $B \subseteq \R^{q}$, we consider the map
\begin{equation}\label{eq_phi}
	\Phi(B):= \cl \cone (B \times \cb{1}) = \cone (B \times \cb{1}) \cup (0_+ B \times \cb{0}),
\end{equation}
where the latter equality follows from \cite[Theorem 8.2]{Rockafellar72}. 
\begin{proposition}\label{p2}
Let $B \subseteq \R^q$ be a polyhedron. Then $\Phi$ is an inclusion-invariant (i.e., $F_1 \subseteq F_2 \Leftrightarrow \Phi(F_1)\subseteq \Phi(F_2)$) one-to-one map between the set of all nonempty faces $F$ of $B$ and the set of all faces $E$ of $\Phi(B)$ with the property $E \not\subseteq 0_+B \times \cb{0}$. The inverse map is
$$ \Phi^{-1} (E) = p[E \cap (B \times\cb{1})].$$
  	For every nonempty face $F$ of $B$ one has $\dim F + 1 = \dim\Phi(F)$. 
\end{proposition}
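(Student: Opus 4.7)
The plan is to use the supporting-hyperplane description of polyhedral faces together with the decomposition $\Phi(B) = \cone(B \times \{1\}) \cup (0_+ B \times \{0\})$ from \eqref{eq_phi}, and to exhibit the bijection directly as $F \mapsto \Phi(F)$.

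First, for a nonempty face $F$ of $B$ I would show that $\Phi(F)$ is a face of $\Phi(B)$. The case $F = B$ is trivial, so assume $F = B \cap H$ with $H = \{y : \langle a, y \rangle = \beta\}$ a supporting hyperplane of $B$, i.e.\ $\langle a, \cdot\rangle \leq \beta$ on $B$. I would check that the homogenized hyperplane $\widetilde H := \{(y,t) : \langle a, y \rangle = \beta t\}$ supports $\Phi(B)$, since $(y,t) \in \Phi(B)$ gives $\langle a, y \rangle \leq \beta t$ both when $t>0$ (from $y/t \in B$) and when $t=0$ (from $y \in 0_+B$, on which $\langle a, \cdot\rangle \leq 0$). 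Using $0_+(B \cap H) = 0_+ B \cap \ker\langle a, \cdot\rangle$, the two-part decomposition of $\Phi(\cdot)$ then yields $\Phi(B) \cap \widetilde H = \Phi(F)$. Since $F$ is nonempty, $F \times \{1\} \subseteq \Phi(F)$, so $\Phi(F) \not\subseteq 0_+ B \times \{0\}$.

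For the reverse direction, given a face $E$ of $\Phi(B)$ with $E \not\subseteq 0_+B \times \{0\}$, put $F := p[E \cap (B \times \{1\})]$. A nonempty face of a closed convex cone is itself a closed cone, so the hypothesis yields some $(y, t) \in E$ with $t>0$, and then $(y/t, 1) = t^{-1}(y, t) \in E \cap (B \times \{1\})$; hence $F$ is nonempty. To verify $F$ is a face of $B$, I would lift to level $1$: if $\lambda y_1 + (1-\lambda)y_2 \in F$ with $y_i \in B$ and $\lambda \in (0,1)$, then $\lambda(y_1, 1) + (1-\lambda)(y_2, 1) \in E$, so the face property of $E$ in $\Phi(B)$ forces $(y_i, 1) \in E$, hence $y_i \in F$. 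The inverse formula $\Phi^{-1}(E) = p[E \cap (B \times \{1\})]$ (and injectivity of $\Phi$) falls out of the trivial identity $\Phi(F) \cap (B \times \{1\}) = F \times \{1\}$.

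The main technical step is then $\Phi(F) = E$. The inclusion $\cone(F \times \{1\}) \subseteq E$ is immediate because $E$ is a cone containing $F \times \{1\}$. For $0_+F \times \{0\} \subseteq E$, the plan is a limiting argument: for any $f \in F$ and $r \in 0_+F$ one has $f + \lambda r \in F$, so $\lambda^{-1}(f + \lambda r, 1) \in E$ for $\lambda > 0$, and these points converge to $(r, 0)$ as $\lambda \to \infty$, which lies in the closed set $E$. Conversely, for $(y, t) \in E$ with $t>0$ rescaling puts $(y/t, 1) \in F \times \{1\}$ so $(y,t) \in \cone(F \times \{1\})$, while for $(y, 0) \in E$ the cone property gives $(f, 1) + (y, 0) = (f + y, 1) \in E \cap (B \times \{1\}) = F \times \{1\}$ for every $f \in F$, so $y \in 0_+F$. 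Inclusion-invariance is then automatic ($F_1 \subseteq F_2 \Rightarrow \Phi(F_1) \subseteq \Phi(F_2)$ from the definition, the reverse by intersecting with $B \times \{1\}$), and the dimension identity follows from the fact that $\aff(F \times \{1\})$ has dimension $\dim F$ and misses the origin, so adjoining $0$ adds exactly one dimension to the linear span. The step I expect to be the most delicate is the level-zero analysis: getting $0_+F \times \{0\}$ into $E$ via closedness and, conversely, reading recession directions of $F$ off from the zero-slice of $E$ using the cone/face structure.
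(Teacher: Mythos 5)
Your proof is correct, and its first half takes a genuinely different route from the paper's. To show that $\Phi(F)$ is a face of $\Phi(B)$ you invoke the fact that every nonempty proper face of a polyhedron is exposed, write $F = B \cap H$ for a supporting hyperplane $H=\cb{y: a^Ty=\beta}$, and check that the homogenized functional $(y,t)\mapsto a^Ty-\beta t$ is nonpositive on $\Phi(B)$ and vanishes exactly on $\Phi(F)$; the identity $0_+(B\cap H)=0_+B\cap\cb{y: a^Ty=0}$ is what makes the zero-level slice come out right. The paper instead verifies the face property of $\Phi(F)$ directly from the definition, rescaling two points of $\Phi(B)$ onto $(B\times\cb{1})\cup(0_+B\times\cb{0})$, and relies on an auxiliary lemma that $0_+F$ is a face of $0_+B$. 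Your argument is shorter but leans on polyhedrality (exposedness of faces), whereas the paper's direct argument would go through for a general closed convex $B$; both are legitimate under the stated hypothesis. The second half --- surjectivity via $F:=p[E\cap(B\times\cb{1})]$, the lifting argument showing $F$ is a face of $B$, and the two-sided verification of $\Phi(F)=E$ --- is essentially the paper's argument; your limiting argument for $0_+F\times\cb{0}\subseteq E$ via closedness of $E$ replaces the paper's observation that $y\in 0_+\Phi(F)=\Phi(F)$, and the two are interchangeable. One small point to make explicit: when reading off $y\in 0_+F$ from $(y,0)\in E$, you should use that $(\lambda y,0)\in E$ for every $\lambda\ge 0$ (or iterate the map $f\mapsto f+y$), so that $f+\lambda y\in F$ for all $\lambda\ge 0$ and not just $\lambda=1$.
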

\begin{proof}
By \eqref{eq_phi}, $\Phi$ is inclusion-invariant and enlarges the dimension by one.

If $F$ is a face of $B$, then $0_+F$ is a face of $0_+B$. Indeed, as a face $F$ of $B$ is closed and convex, $F \subseteq B$ implies $0_+F \subseteq 0_+B$. As $F$ is convex, so is $0_+F$. Choose some $x \in F$ (the case $F = \emptyset$ is obvious). Let $y,z \in 0_+B$, $\lambda \in (0,1)$ and $\lambda y +(1-\lambda)z \in 0_+F$. For all $\mu \geq 0$ we have 
$x+\mu y,\;x+\mu z \in B$ and $\lambda (x+\mu y) + (1-\lambda) (x+\mu z) \in F$. Since $F$ is a face of $B$, we obtain $x+\mu y,\; x+\mu z \in F$ for all $\mu \geq 0$ and hence $y,z \in 0_+F$.	
	
We next show that $\Phi(F)$ is a face of $\Phi(B)$, whenever $F$ is a face of $B$. Indeed, let $y,z \in \Phi(B)$, $\lambda \in (0,1)$ and $\lambda y + (1-\lambda) z \in \Phi(F)$ for a face $F$ of $B$. Using \eqref{eq_phi}, we see that there exist $\gamma,\delta > 0$ such that 
$\cb{\gamma y,\delta z} \subseteq (B \times \cb{1}) \cup (0_+B \times \cb{0})$, hence 
$(\frac{\lambda}{\gamma} + \frac{1-\lambda}{\delta})^{-1}(\frac{\lambda}{\gamma} \gamma y + \frac{1-\lambda}{\delta} \delta z) \in (F \times \cb{1}) \cup (0_+F \times \cb{0})$. Since $F$ is a face of $B$ and, as shown above, $0_+F$ is a face of $0_+B$, we conclude that $\cb{\gamma y,\delta z} \subseteq (F \times \cb{1}) \cup (0_+F \times \cb{0})$ and hence $y,z \subseteq \Phi(F)$.

Let $E$ be a face of $\Phi(B)$ such that $E \not\subseteq 0_+B \times \cb{0}$. Then there is a face $F$ of $B$ such that $\Phi(F)=E$. To prove this, we set $F:=p[E \cap (B \times\cb{1})]$. Of course, $F$ is a convex subset of $B$. To show that $F$ is a face of $B$, let $y,z \in B$, $\lambda \in (0,1)$, $\lambda y + (1-\lambda) z \in F$. We conclude that $y\times \cb{1}, z \times \cb{1} \in \Phi(B)$ and $\lambda (y\times \cb{1}) + (1-\lambda) (z\times\cb{1}) \in E$ (as $E$ is a cone). Since $E$ is a face of $\Phi(B)$, we obtain $y\times \cb{1}, z \times \cb{1} \in E$, which implies $y,z \in F$. Thus $F$ is a face of $B$.
It remains to show that $\Phi(F)= E$. We have $F \times \cb{1} \subseteq E$. This implies $\Phi(F) \subseteq E$ as $E$ is a closed cone. To show the inclusion $E \subseteq \Phi(F)$, let $y \in E \subset \Phi(B)$. If $y_{q+1} > 0$ there exists $\gamma > 0$ such that $\gamma y \in E \cap (B \times \cb{1})$, whence $y \in \Phi(F)$. Otherwise, if $y_{q+1} = 0$, we have $y \in 0_+B \times \cb{0} = 0_+(B \times \cb{1})$. By assumption we have $E \not\subseteq 0_+B \times \cb{0}$. Hence we can choose some $x \in E \cap (B \times \cb{1})$. Taking into account that $E = 0_+ E$, we obtain that $x + \mu y \subseteq E \cap (B \times \cb{1})  \subseteq \Phi(F)$ for all $\mu \geq 0$. Hence $y \in 0_+ \Phi(F) = \Phi(F)$. 
\end{proof}

It follows the main result, which shows how a solution of \eqref{VLP} can be obtained from a solution to \eqref{P}.

\begin{theorem}\label{th_sol}
	Let a linear vector optimization problem \eqref{VLP} with nonempty feasible set be given. Consider problem \eqref{P}, where the matrices $G$ and $H$ are chosen as in \eqref{eq_gh}. Let $(X\dir ,X\lin)$ be a solution to \eqref{P}. For $L_\P := \Span\!\{p(y) :\; (x,y) \in X\lin\}$, we assume
\begin{equation}\label{eq_ex}
	L_\P \cap C = \cb{0},
\end{equation}	
and define 
\begin{align*}
 S\poi &:= \cb{\frac{1}{y_{q+1}} x :\; (x,y) \in X\dir ,\; y_{q+1} > 0},\\
 S\dir &:= \cb{x :\; (x,y) \in X\dir ,\; y_{q+1} = 0,\; p(y) \not\in C + L_\P}, \\
 S\lin &:= \cb{x :\; (x,y) \in X\lin}.
\end{align*}
Then $(S\poi, S\dir, S\lin)$ is a solution to \eqref{VLP}. If \eqref{eq_ex} is violated for a solution $(X\dir ,X\lin)$ of \eqref{P}, a minimizer for \eqref{VLP} does not exist and hence \eqref{VLP} has no solution.
\end{theorem}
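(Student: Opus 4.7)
\emph{Proof plan.} I translate the information encoded in the solution $(X\dir,X\lin)$ of \eqref{P}, which concerns the projected cone $K=\cl\cone(\P\times\cb{1})$, into information about $\P$ via Propositions \ref{p21} and \ref{p2}. A direct check shows that $(d,t)\in L(K)$ forces $t=0$ and $\pm d\in 0_+\P$, so $L(K)=L(\P)\times\cb{0}$. Hence $\cb{p(y^{\alpha+1}),\dots,p(y^{\alpha+\beta})}$ is a basis of $L(\P)$, $L_\P=L(\P)$ holds unconditionally, and \eqref{eq_ex} is just the standard condition $L(\P)\cap C=\cb{0}$ for existence of $C$-minimal points. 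Combining with Proposition \ref{p2}, the extreme directions of the pointed cone $K\cap L(K)^\bot$ split into two classes: those with $y^j_{q+1}>0$ yield, after scaling, the vertices of $\P_0:=\P\cap L(\P)^\bot$; those with $y^j_{q+1}=0$ yield the extreme directions of $0_+\P_0$.

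Unpacking \eqref{eq_gh}, every feasible $(x,y)$ of \eqref{P} satisfies $Ax\geq by_{q+1}$ and $p(y)-Px\in C$. This yields $S\poi\subset S$, $S\dir\subset 0_+S$, $S\lin\subset L(S)$, and for $(x,y)\in X\lin$ pointedness of $C$ forces $p(y)=Px$, so $\Span P[S\lin]=L(\P)$. Slicing $K$ at $y_{q+1}=1$ gives the representation $\P=\conv\cb{p(y^j)/y^j_{q+1}:\; j\in J_+}+\cone\cb{p(y^j):\; j\in J_0}+L(\P)$, where $J_+$, $J_0$ denote the two classes above; replacing each $p(y^j)$ by $Px^j+c^j$ with $c^j\in C$ and absorbing those directions with $p(y^j)\in C+L_\P$ into the summand $C$ then delivers \eqref{eq_infatt}.

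The heart of the proof is minimality, for which I establish a pushback lemma: if $v$ is a vertex of $\P_0$ (resp.\ generates an extreme ray of $0_+\P_0$) and $v+tc\in\P$ (resp.\ $\in 0_+\P$) for all $t\in[-1,\infty)$ with some $c\in C$, then $c\in L(\P)$ (resp.\ $c\in\R_+\cdot v+L(\P)$). The proof projects to $L(\P)^\bot$: the half-line $v+tc_\perp$ sits inside the pointed set $\P_0$ (resp.\ $0_+\P_0$) for $t\in[-1,\infty)$, and the vertex/extreme-ray property forces $c_\perp=0$ (resp.\ $c_\perp\in\R_+\cdot v$), after which \eqref{eq_ex} kills the remaining $L(\P)$-component. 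Applied to $x\in S\poi$: if $Px$ were not $C$-minimal, write $v:=p(y^j)/y^j_{q+1}=Px+c$ and $Px=y'+c'$ with $c'\in C\smz$; then $u:=c+c'\in C\smz$ satisfies $v-u=y'\in\P$ and $v+tu\in\P+C=\P$ for $t\geq 0$, contradicting the lemma. The same template handles $S\dir$ (where the hypothesis $p(y^j)\notin C+L_\P$ precisely rules out the alternative conclusion of the extreme-ray version of the lemma) and $S\lin$ (where $\bar x\in L(S)$ and $P\bar x=Pz+c'$ give $z-\bar x\in 0_+S$, hence $-c'\in P[0_+S]\subset 0_+\P$, so $c'\in L(\P)\cap C=\cb{0}$).

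Finally, if \eqref{eq_ex} fails, any $c\in L(\P)\cap C\smz$ gives $-c\in L(\P)\subset 0_+\P$ and hence $P\bar x=(P\bar x-c)+c\in P[S]+C\smz$ for every $\bar x\in S$, so no point minimizer exists and \eqref{VLP} has no solution. The main obstacle is the pushback lemma: it is where \eqref{eq_ex} is actually used, and it is the only non-routine bridge from the conical extremality information encoded in $(X\dir,X\lin)$ to $C$-minimality in $\P$.
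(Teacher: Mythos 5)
Your proposal is correct and follows essentially the same route as the paper's proof: the identification of extreme rays of $K\cap L(K)^\bot$ having positive, respectively zero, last coordinate with vertices of $\P\cap L(\P)^\bot$ and extreme directions of $0_+\P\cap L(\P)^\bot$ via Propositions \ref{p21} and \ref{p2}, the same index decomposition for \eqref{eq_infatt}, and your ``pushback lemma'' is precisely the paper's extremality argument in reusable form (including the scalar $c_\perp=\mu v$ with the case $\mu<0$ excluded by pointedness and the unbounded half-line, which is the paper's ``$\mu>1$'' case). Two minor points: in your final paragraph $P\bar x - c$ lies in $P[S]+C$ rather than in $P[S]$ (harmless, since $C+C\smz\subseteq C\smz$ for the pointed cone $C$), and you only rule out point minimizers there, whereas the theorem also asserts that no direction minimizer exists --- the identical argument with $0_+S$ and $0_+\P$ in place of $S$ and $\P$ closes this.
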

\begin{proof}
The $y$ components of $X\lin$ provide a basis of the lineality space $L = L(K)$ of $K$. For $G$ and $H$ as defined in \eqref{eq_gh}, we have $y_{q+1}= 0$ for all $y \in L$. Using Proposition \ref{p21} and \eqref{eq_phi}, we obtain $L_\P = L(\P)$. It is straightforward to verify that $S\poi \subseteq S$, $S\dir\subseteq 0_+S$, $S\lin \subseteq L(S)$. By assumption, we have $S \neq \emptyset$, which implies $\P \neq \emptyset$. It follows that $K \cap L^\bot$ has an extreme direction $y$ with $y_{q+1}>0$, whence $S\poi \neq \emptyset$.  
	
Let $x \in S\poi$, i.e., there is $y$ such that $(x,y) \in X\dir $ and $y_{q+1}= 1$. Setting $\bar y := p(y)$ we have $A x \geq b$ and $\bar y \geq_C P x$. We conclude that $\cone\!\cb{y} = \Phi(\cb{\bar y})$ is a one-dimensional face of $K$. Hence, $\bar y$ is a vertex of $\P$. Every vertex of $\P=P[S]+C$ is $C$-minimal, thus $Px = \bar y$ and $x$ is a minimizer for \eqref{VLP}.

Let $x \in S\dir$, i.e., there is $y$ such that $(x,y) \in X\dir$, $y_{q+1}= 0$ and $\bar y := p(y) \not\in C + L_\P$. From $y \in K$ and $y_{q+1}=0$, we conclude $\bar y \in 0_+\P$, compare 
Proposition \ref{p21} and \eqref{eq_phi}. We have $L^\bot = (L(\P) \times \cb{0})^\bot = L(\P)^\bot \times \R$. Since $y \in L^\bot$, we obtain $\bar y \in L(\P)^\bot = L_\P^\bot$. Together, we have $\bar y \in 0_+\P \cap L_\P^\bot$. Since $0_+\P \times \cb{0}$ is a face of $K$ that contains $L=L(K)$, we conclude that $(0_+\P \times \cb{0}) \cap L^\bot$ is a nonempty face of $K\cap L^\bot$. By the definition of $X\dir $, $y$ is an extreme direction of $K \cap L^\bot$. Thus $y$ is also an extreme direction of $(0_+\P \times \cb{0}) \cap L^\bot = (0_+\P \cap L_\P^\bot) \times \cb{0}$. Assume that $\bar y$ is not minimal in $0_+\P$. Then there is some $z \in 0_+ \P$ such that $\bar y-z\in C\smz$. There exist $\bar z \in L_\P$ and $\hat z \in 0_+\P \cap L_\P^\bot$ such that $z = \bar z + \hat z$. Since $\bar y - \hat z \in C + L_\P$, $0_+\P = 0_+ \P + C + L_\P$ and $\bar y - \hat z \in L_\P^\bot$, we obtain $\bar v := 2\bar y - \hat z = \bar y + (\bar y - \hat z) \in 0_+\P \cap L_\P^\bot$.  We have $\bar y = \frac{1}{2} \hat z + \frac{1}{2}\bar v$ for $z, \bar v \in 0_+\P \cap L_\P^\bot$. But $\bar y$ is an extreme direction of $0_+\P \cap L_\P^\bot$, which yields $\hat z = \mu \bar y$ for some $\mu \in \R$. We have $\bar y \neq \hat z$, since otherwise $-\bar z = \bar y - z \in C\smz \cap L_\P$, which contradicts \eqref{eq_ex}. Thus $\mu \neq 1$. Assuming that $\mu < 1$, we obtain $\bar y = (1-\mu)^{-1} (\bar y - \hat z) \in C + L_\P$, which contradicts the definition of $S\dir$. Therefore the case $\mu > 1$ remains. We obtain $\bar y = (1-\mu)^{-1} (\bar y - \hat z) \in -C + L_\P$, which implies $y \in -K + L = -K$. But $y \in K \cap L^\bot$, whence $y \in L \cap L^\bot = \cb{0}$ and thus $0 = \bar y \in C+ L_\P$, which contradicts the definition of $S\dir$. 

Let $x \in S\lin$, i.e., there is $y$ such that $(x,y) \in X\lin$. We have $\bar y:= p(y) \in L_\P = L(\P) \subseteq 0_+\P$. Assume that $\bar y$ is not minimal in $0_+\P$, i.e., there exists $\bar z \in 0_+\P$ such that $\bar y - \bar z \in C\smz \subseteq 0_+\P$. Moreover, we have $\bar y - \bar z = L(\P) - 0_+\P = -0_+\P$. We conclude that $\bar y - \bar z \in L(\P) \cap C\smz$, which contradicts \eqref{eq_ex}.  
	
To verify \eqref{eq_infatt} it remains to show the inclusion $\supseteq$. Let $\bar y \in P[S]+ C$ be given. Then, $y := (\bar y,1)^T \in K$ can be expressed by a solution $(X\dir ,X\lin)$ with $X\dir =\{(x^1,y^1),\dots,(x^\alpha,y^\alpha)\}$, $X\lin=\{(x^{\alpha+1},y^{\alpha+1}),\dots,(x^{\alpha+\beta},y^{\alpha+\beta})\}$, and appropriate $\lambda_1,\dots,\lambda_\alpha \in \R_+$, $\lambda_{\alpha+1},\dots,\lambda_{\alpha+\beta} \in \R$ as
$$ y = \sum_{i=1}^{\alpha+\beta} \lambda_i y^i =
 \sum_{i \in I_1} \lambda_i y^i_{q+1} \frac{y^i}{y^i_{q+1}} + \sum_{i\in I_2 \cup I_3 \cup I_4} \lambda_i y^i,$$
where we consider the disjoint index sets
\begin{align*}
I_1 &= \cb{i \in \cb{1,\dots, \alpha}:\; y^i_{q+1} > 0},\\ 
I_2 &= \cb{i \in \cb{1,\dots, \alpha}:\; y^i_{q+1} = 0,\; y^i \not\in (C + L_\P) \times\cb{0}},\\
I_3 &= \cb{i \in \cb{1,\dots, \alpha}:\; y^i \in (C+ L_\P) \times\cb{0}},\\
I_4 &= \cb{\alpha+1,\dots, \alpha + \beta}.
\end{align*} 	
For $i \in I_1$, we have $(y^i_{q+1})^{-1} y^i \in (P[S\poi] + C) \times \cb{1}$. Moreover, we have $y^i \in (P[S\dir] + C) \times \cb{0}$ for $i \in I_2$, $y^i \in (C + P[S\lin]) \times \cb{0}$ for $i \in I_3$ (note that $L_\P = P[S\lin]$) and $y^i \in P[S\lin] \times \cb{0}$ for $i\in I_4$. We conclude that $\sum_{i\in I_1} \lambda_i y^i_{q+1} = y_{q+1} = 1$. Together we obtain
$y \in \conv P[S\poi] + \cone P[S\dir] + \Span P[S\lin] + C$, which completes the proof of \eqref{eq_infatt}. 

Finally, assume that $\bar x$ is a minimizer of \eqref{VLP}, but \eqref{eq_ex} is not satisfied. 
We can choose some $\bar y \in C\smz \cap L(\P)$. If $\bar x$ is a point, we have $\hat y := P \bar x - \bar y \in \P$. There exists $\hat x \in S$ such that $\hat y - P\hat x \in C$. Thus $P \bar x - P \hat x = \hat y - P \hat x +\bar y \in C + C\smz = C\smz$. If $\bar x$ is a direction, we obtain $\hat y := P \bar x - \bar y \in 0_+\P = P[0_+ S] + C$. There exists $\hat x \in 0_+S$ such that $\hat y - P \hat x \in C$. As above, we conclude $P \bar x - P \hat x = C\smz$. In both cases, this contradicts the definition of a minimizer.
\end{proof}

A {\em solution} to \eqref{VLP_star} is defined as follows, compare \cite{Loehne11, HeyLoe11, HamLoeRud13} for a special case. A point $(\bar u, \bar w) \in T$ is said to be a {\em maximizer} for \eqref{VLP_star} if there is no $(u,w) \in T$ such that $D(\bar u,\bar w) \leq_{R^*} D(u,w)$, $D(\bar u,\bar w) \neq D(u,w)$, that is, $(\bar u,\bar w) \in T$, $D(\bar u, \bar w) \not \in D[T] - R^*\smz$. A direction $(\bar u, \bar w) \in \R^{m+q}\smz$ of $T$ is called a {\em maximizer} for \eqref{VLP_star} if $(\bar u,\bar w) \in (0_+ T)\smz$, $D(\bar u,\bar w) \not \in D[0_+ T] - R^*\smz$. A triple $(T\poi, T\dir , T\lin) \subseteq \R^{m+q} \times \R^{m+q}\smz \times \R^{m+q}\smz$ is called {\em feasible} if $T\poi \neq \emptyset$, $T\poi \subseteq T$, $T\dir  \subseteq 0_+T$, $T\lin \subseteq L(T)$. If $(T\poi, T\dir, T\lin)$ is feasible, if the sets $T\poi$, $T\dir $, $T\lin$ are finite and if
\begin{equation*}\label{eq_supatt}
\conv D[T\poi] + \cone D[T\dir ] + \Span D[T\lin] - R^* = D[T] - R^*,
\end{equation*}
then $(T\poi, T\dir , T\lin)$ is called a {\em finite supremizer} for \eqref{VLP_star}. A finite supremizer is called a {\em solution} to \eqref{VLP_star} if its three components consist of maximizers only.

Let us introduce a solution concept for the dual cone projection problem \eqref{D} in order to relate it to a solution $(T\poi,T\dir,T\lin)$ of \eqref{VLP_star}. A point $(u,w) \in \R^m \times \R^p$ is said to be feasible for \eqref{D} if it satisfies $w = -H^T u,\; G^T u = 0,\; u \geq 0$. A pair $(U\dir, U\lin)$ of two finite sets $U\dir =\cb{(u^1,w^1),\dots,(u^\gamma,w^\gamma)}$ and $U\lin =\cb{(u^{\gamma + 1},w^{\gamma +1}),\dots,(u^{\gamma + \delta}, w^{\gamma + \delta})}$ of feasible points
is called a {\em solution} to \eqref{D} if $\cb{w^{\gamma +1},\dots,w^{\gamma + \delta}}$ is a basis of the lineality space $L(K^*)$ of $K^*$ and $\cb{w^1,\dots, w^\gamma}$ is the set of extreme directions of $K^* \cap L(K^*)^\bot$.

\begin{theorem}\label{th_sol_d}
Assume that the dual linear vector optimization problem \eqref{VLP_star} has a non\-empty feasible set.  Let $(U\dir ,U\lin)$ be a solution to the dual cone projection problem \eqref{D}. For $L_\D := \Span\!\{p(M^{-1}w) :\; (u,w) \in U\lin\}$ assume that
\begin{equation}\label{eq_ex_d}
	L_\D \cap R^* = \cb{0},
\end{equation}	
and define 
\begin{align*}
 T\poi &:= \cb{\frac{-1}{c^T w} (u,M^{-1}w) :\; (u,w) \in U\dir ,\; c^T w < 0},\\
 T\dir &:= \cb{(u,M^{-1}w) :\; (u,w) \in U\dir ,\; c^T w = 0,\; p^*(w) \not\in R^* + L_\D}, \\
 T\lin &:= \cb{(u,M^{-1}w) :\; (u,w) \in U\lin}.
\end{align*}
Then $(T\poi, T\dir, T\lin)$ is a solution to \eqref{VLP_star}. If \eqref{eq_ex_d} is violated for a solution $(U\dir ,U\lin)$ of \eqref{D}, then there does not exist a maximizer for \eqref{VLP_star} and hence \eqref{VLP_star} has no solution.
\end{theorem}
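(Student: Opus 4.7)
The plan is to mirror the proof of Theorem \ref{th_sol} with the systematic substitutions $\P \to \D$, $C \to R^*$, $K \to K^*$, Proposition \ref{p21} $\to$ Proposition \ref{p22}, and minimization replaced by maximization. The only new ingredient compared to the primal case is the regular coordinate transformation $M$ from \eqref{eq_m}: Proposition \ref{p22} yields $K^* = M[\Phi(\D)]$, so applying $M^{-1}$ transports the solution $(U\dir, U\lin)$ of \eqref{D} into a description of the extreme rays and the lineality of $\Phi(\D)$, to which Proposition \ref{p2} applies directly.

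First I would record the identities that make the primal analysis applicable after the $M^{-1}$-twist. Setting $\tilde w := M^{-1} w$, a direct computation with \eqref{eq_m} and the normalization $c_q = 1$ gives $\tilde w_{q+1} = -c^T w$; thus the case distinction $c^T w < 0$ vs.\ $c^T w = 0$ in the definitions of $T\poi$ and $T\dir$ matches the dichotomy $\tilde w_{q+1} > 0$ vs.\ $\tilde w_{q+1} = 0$ required by Proposition \ref{p2}, producing vertices of $\D$ in the first case and extreme directions of $0_+\D$ modulo $L(\D)$ in the second. In parallel with the relation $L_\P = L(\P)$ in the proof of Theorem \ref{th_sol}, the identity $L(\Phi(\D)) = L(\D) \times \{0\}$ together with the basis property of the $w$-components of $U\lin$ for $L(K^*)$ yields $L_\D = L(\D)$.

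With these identities in place I would run the three parts of the primal argument. Feasibility of $(T\poi, T\dir, T\lin)$ follows by unpacking the defining conditions $w = -H^T u$, $G^T u = 0$, $u \geq 0$ of \eqref{D} with the explicit form of $G,H$ from \eqref{eq_gh} and matching them against the definition of $T$. The generating identity $\conv D[T\poi] + \cone D[T\dir] + \Span D[T\lin] - R^* = \D$ is verified by lifting an arbitrary $\bar w \in \D$ to $M(\bar w,1)^T \in K^*$, expanding it as a nonnegative combination of the extreme rays and lineality vectors provided by $(U\dir, U\lin)$, and splitting the sum by the sign of $\tilde w^i_{q+1}$ exactly as in the $I_1, I_2, I_3, I_4$ decomposition of Theorem \ref{th_sol}. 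The maximizer property of each component is then verified as in the primal case: vertices of $\D$ are automatically $R^*$-maximal; an extreme direction of $0_+\D$ whose representative escapes $R^* + L(\D)$ gives a maximizer direction; and condition \eqref{eq_ex_d} plays the role of \eqref{eq_ex}, ruling out the degenerate situation where $L(\D)\cap R^*$ is nontrivial (using the inclusion $-R^* \subseteq 0_+\D$, coming from $\D = D[T] - R^*$, in place of $C \subseteq 0_+\P$ used in the primal). The failure statement is handled as in the last paragraph of the proof of Theorem \ref{th_sol}: a nonzero $\bar v \in L(\D) \cap R^*\smz$ can be used to produce a strict $R^*$-improvement of the $D$-image of any candidate maximizer while staying inside $\D$, contradicting the definition of a maximizer.

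The main book-keeping obstacle is that the condition $p^*(w) \not\in R^* + L_\D$ is stated in the original $w$-coordinates while the primal template runs in the $\tilde w$-coordinates; a short calculation yields $p(M^{-1}w) = J\,p^*(w)$ with $J := \mathrm{diag}(-1,\dots,-1,1) \in \R^{q\times q}$. Since $R^*$ is pointwise fixed by $J$, the $R^*$-part of the membership test translates cleanly, and the care required is in tracking how the sign flip on the first $q-1$ coordinates interacts with the non-domination argument. After this the remainder of the proof proceeds line-by-line in parallel with the proof of Theorem \ref{th_sol}.
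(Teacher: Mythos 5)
Your proposal is correct and follows essentially the same route as the paper, whose own proof is just the one-line remark that the argument is analogous to Theorem \ref{th_sol} after transporting $(U\dir,U\lin)$ by $M^{-1}$, with $-R^*\subseteq 0_+\D$ replacing $C\subseteq 0_+\P$ and the identity $M^{-1}w=(-w_1,\dots,-w_{q-1},w_{q+1},-c^Tw)^T$ translating the membership condition. Your write-up in fact spells out more of the bookkeeping (the dichotomy $\tilde w_{q+1}>0$ versus $\tilde w_{q+1}=0$, and the sign-flip matrix $J$ fixing $R^*$) than the paper does.
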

\begin{proof}
The proof is analogous to the proof of Theorem \ref{th_sol} if we consider  
$$\bar U\dir = \cb{(u,M^{-1} w) : (u,w) \in U\dir} \quad\text{ and }\quad \bar U\lin = \cb{(u,M^{-1} w) : (u,w) \in U\lin},$$	
compare Propositions \ref{p21} and \ref{p22}. We have $M^{-1} w = (-w_1,\dots,-w_{q-1},w_{q+1}, -c^T w)^T$ and we note that $-R^* \subset 0_+ \D$ is the replacement for $C \subseteq 0_+\P$ as we consider maximization instead of minimization. Note further that $p(M^{-1} w) \not\in -R^*  + L_\D$ can be written as $p^*(w) \not\in R^*  + L_\D$.	
\end{proof}

Note that, in order to enhance the symmetry between \eqref{VLP} and \eqref{VLP_star}, the ordering cone $R^*$ in the dual problem could be replaced by a pointed polyhedral convex cone $C^*$ satisfying $R^* \subseteq C^* \subseteq - 0_+ \D$.

\section{Consequences for linear vector optimization algorithms}

Using the results of the last section we want to propose an algorithm to solve linear vector optimization problems, which is based on the computation of the vertices of a polytope. By Theorem \ref{th_sol}, \eqref{VLP} can be solved by determining the extreme directions of $K$ as well as a basis of the lineality space $L(K)$ of $K$. A solution of the dual problem \eqref{VLP_star} can be obtained likewise by Theorem \ref{th_sol_d}. We start with some facts about the facial structure of the polyhedral convex cones $K$ and $K^*$.


Let $L(K)$ and $L(K^*)$ be the lineality spaces of $K$ and $K^*$, respectively. Consider $\hat K := K \cap L(K)^\bot$ and $\hat K^* := K^* \cap L(K^*)^\bot$. Setting $V:=L(K)^\bot \cap L(K^*)^\bot$, we have 
$\hat K \subseteq V$ and $\hat K^* \subseteq V$. Both $\hat K$ and $\hat K^*$ are pointed. From $L(K) \subseteq K$ we conclude $K^* = K^\circ \subseteq L(K)^\bot$ and hence $(K^* + L(K))\cap L(K)^\bot = K^*$. This implies $(\hat K)^\circ \cap V = (K \cap L(K)^\bot)^\circ \cap V = (K^\circ + L(K))\cap L(K)^\bot \cap L(K^*)^\bot = K^* + L(K^*)^\bot = \hat K^*$. Likewise we have $(\hat K^*)^\circ \cap V = \hat K$. As $\hat K$ and $\hat K^*$ are pointed convex cones that are polar to each other relative to $V$, we conclude that both have nonempty interior relative to $V$.
	
Let $\xi \in \ri \hat K$ and $\eta \in \ri \hat K^*$ such that $\xi^T \eta = -1$. Then, $B:=\{y \in \hat K :\; \eta^T y = -1\}$ and $B^*:=\{w \in \hat K^* :\; \xi^T w = -1\}$ provide a bounded base of $\hat K$ and $\hat K^*$, respectively. Applying Proposition \ref{p2} and taking into account an appropriate coordinate transformation, we obtain an inclusion-invariant one-to-one map between the nonempty faces $F$ of $B$ and the nonempty faces $\hat F$ of $\hat K$ with the property $\hat F \neq \cb{0}$, and likewise for $B^*$ and $\hat K^*$. Using appropriate coordinates, $B_\xi := B - \{\xi\}$ and $B_\eta^* := B^* - \{\eta\}$ are mutually polar polytopes in $\R^{p-1}$, that is $B_\xi=\{y \in \R^{p-1} :\; \forall w \in B_\eta^*:\; w^T y \leq 1\}$ and $B_\eta^*=\{w \in \R^{p-1} :\; \forall y \in B_\xi:\; y^T w \leq 1\}$. Indeed let $y \in B$ and $w \in B^*$. Then $ (y-\xi)^T (w-\eta) = y^T w - \xi^T w - \eta^T y + \xi^T \eta = y^T w + 1 \leq 1$ if and only if $y^T w \leq 0$. 

A resulting algorithm to solve \eqref{VLP} and \eqref{VLP_star} can be outlined as follows:
\begin{enumerate}[(i)]
	\item Compute $L(K)$, $L(K^*)$ as well as two finite sets $E \subseteq \hat K$, $E^* \subseteq \hat K^*$ such that $\dim \cone E = \dim \cone E^* = \dim V$. Determine $\xi \in \ri \hat K$ and $\eta \in \ri \hat K^*$ such that $\xi^T \eta = -1$. 
	\item Consider an appropriate coordinate transformation and a suitable subspace such that $B$ and $B^*$ are polytopes with nonempty interior. Transform $(\cone E)^\circ$ and $(\cone E^*)^\circ$ in the same way as $K$ and $K^*$ in order to obtain polytopes $Q \supseteq B$ and $Q^* \supseteq B^*$ (in contrast to $B$, $B^*$ the vertices of $Q$, $Q^*$ are known or can be easily obtained).
	\item Proceed with an outer approximation algorithm to compute the vertices of $B$ and $B^*$. To this end adapt (simplify) Benson's algorithm \cite{Benson98} to polytopes. See also the references in \cite[Section 8.3]{Bronstein08} for similar methods in the field of approximation of convex bodies.
	\item Compute solutions to \eqref{VLP} and \eqref{VLP_star} by Theorems \ref{th_sol} and \ref{th_sol_d}
\end{enumerate}

In comparison with objective-space-based algorithms (Benson type algorithms) for linear vector optimization problems (see e.g. \cite{HamLoeRud13} and the references therein), the advantages of the new method are as follows:
\begin{itemize}
	\item It is not necessary to distinguish between homogeneous and inhomogeneous problems to treat unbounded problems (compare \cite{Loehne11,HamLoeRud13}).
	\item In contrast to the methods in the literature (see e.g. \cite{HamLoeRud13} for an overview), this approach covers also the case where the upper image $\P$ of \eqref{VLP} has no vertex. Moreover, the ordering cone $C$ is allowed to have an empty interior. 
	\item There is no formal difference between primal and dual algorithms as the same idea can be applied to the polar cone $K^*$. However, note that the dual algorithm uses different (transformed) data and can therefore be better or worse than the primal algorithm dependent on the problem instance.
\end{itemize}
Detailed algorithms as well as numerical results will be presented in a forthcoming paper. Finally we summarize the above considerations in order to apply them in the next section.

\begin{proposition}\label{pgamma}
	Let $K \subseteq \R^p$ be a polyhedral convex cone. The map $\Gamma$ defined by
	$$ \Gamma(F):= \bigcap_{y \in F} \cb{w \in K^* :\; w^T y = 0}$$
	provides an inclusion-reversing one-to-one map between the nonempty faces $F$ of $K$ and the nonempty faces $F^*$ of $K^*$. The inverse map is
$$ \Gamma^{-1}(F^*) := \bigcap_{w \in F^*} \cb{y \in K :\; y^T w = 0}.$$
and for all nonempty faces $F$ of $K$ one has $\dim F + \dim \Gamma(F) = p$.	
\end{proposition}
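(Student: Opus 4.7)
The plan is to reduce the statement to the polytope polarity established in the paragraphs preceding the proposition, composing a short chain of bijections. As a first step I would strip the lineality spaces: every nonempty face $F$ of $K$ contains $L(K)$, so $\hat F \mapsto \hat F + L(K)$ is an inclusion-invariant one-to-one correspondence between the nonempty faces $\hat F$ of $\hat K = K \cap L(K)^\bot$ and the nonempty faces $F$ of $K$, with $\dim F = \dim \hat F + \dim L(K)$; the analogous statement holds for $\hat K^*$ and $K^*$. This reduces the problem to the pointed cones $\hat K, \hat K^*$ inside $V = L(K)^\bot \cap L(K^*)^\bot$, whose dimension is $\dim V = p - \dim L(K) - \dim L(K^*)$ (since $L(K^*) \subseteq K^* \subseteq L(K)^\bot$, so $L(K) \cap L(K^*) = \{0\}$).

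The second step applies Proposition \ref{p2} to the bounded bases $B$ and $B^*$, after the appropriate affine transformation moving them into their respective linear spans, exactly as the paper already indicates. This yields an inclusion-invariant bijection between the nonempty faces $F_B$ of $B$ and the nonempty faces $\hat F \neq \{0\}$ of $\hat K$, with $\dim \hat F = \dim F_B + 1$, and symmetrically for $B^*, \hat K^*$. The two missing minimal faces $\{0\} \subset \hat K$ and $\{0\} \subset \hat K^*$ correspond under the first step to $L(K) \subseteq K$ and $L(K^*) \subseteq K^*$; I would handle these extremal pairs by hand, noting that $K^* \subseteq L(K)^\bot$ gives $\Gamma(L(K)) = K^*$ directly, and symmetrically $\Gamma(K) = L(K^*)$, with $\dim K + \dim L(K^*) = (\dim V + \dim L(K)) + \dim L(K^*) = p$.

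For the remaining faces I would invoke the polytope polarity just set up: $B_\xi = B - \{\xi\}$ and $B^*_\eta = B^* - \{\eta\}$ are mutually polar full-dimensional polytopes in their $(\dim V - 1)$-dimensional ambient space, so the classical polar theorem gives an inclusion-reversing bijection between their proper faces with $\dim F_B + \dim F_{B^*} = \dim V - 2$. Composing the three bijections yields an inclusion-reversing bijection between all nonempty faces of $K$ and of $K^*$, and the dimension arithmetic collapses to
$$\dim F + \dim F^* = \bigl(\dim \hat F + \dim L(K)\bigr) + \bigl((\dim V - \dim \hat F) + \dim L(K^*)\bigr) = p.$$

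The main remaining task, and the step I expect to be the principal obstacle, is verifying that this composite bijection coincides with $\Gamma$. For this I would use the identity $(y - \xi)^T (w - \eta) = y^T w + 1$ already recorded in the paper: it shows that the polytope polarity condition $(y - \xi)^T(w - \eta) = 1$ on $B_\xi$ and $B^*_\eta$ is equivalent to the orthogonality $y^T w = 0$ on $B$ and $B^*$, which pulls back through the homogeneous lifting $\Phi$ to the same orthogonality on $\hat F$ and $\hat F^*$, and extending by $L(K)$ and $L(K^*)$ preserves it because $w \in K^* \subseteq L(K)^\bot$ forces $w^T y = 0$ on $L(K)$ automatically. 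Thus the face of $K^*$ assigned to $F$ is exactly $\{w \in K^* : w^T y = 0 \text{ for all } y \in F\} = \Gamma(F)$, and the symmetric form of $\Gamma^{-1}$ follows by exchanging the roles of $K$ and $K^*$.
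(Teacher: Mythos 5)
Your proposal is correct and follows essentially the same route as the paper's own (very terse) proof: reduce modulo the lineality spaces via $\hat F \mapsto \hat F + L(K)$, pass to the bounded bases $B$, $B^*$ via Proposition \ref{p2}, and invoke classical polarity of the mutually polar polytopes $B_\xi$, $B^*_\eta$, with the identity $(y-\xi)^T(w-\eta) = y^Tw+1$ translating polytope polarity into the orthogonality defining $\Gamma$. The extra care you take with the extremal pairs $L(K)\leftrightarrow K^*$ and $K\leftrightarrow L(K^*)$ and with identifying the composite map as $\Gamma$ fills in exactly the details the paper leaves implicit.
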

\begin{proof}
	Taking into account the considerations above, we obtain this result from duality of polytopes \cite{Gruenbaum03}. Note further that $\Delta$ defined by $\Delta(\hat F):= \hat F + L(K)$ provides an inclusion invariant one-to-one map between set of all faces of $\hat K:= K \cap L(K)^\bot$ and set of all faces of $K$, where $\Delta^{-1}(F) = F \cap L(K)^\bot$; likewise for $K^*$.   
\end{proof}

	

\section{Alternative proof of the geometric duality theorem} \label{sec_proofs}

Using the results of the previous sections and a few additional components we obtain an alternative proof of the geometric duality theorem. Note that some basic ideas of this proof can already be found in \cite{Luc11}, where a parametric dual problem is introduced, polarity between a polyhedral set and the epigraph of its support function is utilized to prove duality assertions, and geometric duality (for the special case $C=\R^q_+$) is shown to be a consequence. 

We will see that the duality map $\Psi$ in Theorem \ref{gd} can be expressed as
\begin{equation}\label{eq_dm}
\Psi(F^*) = (\Phi^{-1} \circ \Gamma^{-1}\circ M\circ\Phi)(F^*),
\end{equation}
compare Propositions \ref{p2} and \ref{pgamma}, and \eqref{eq_m}. 

\begin{proposition}\label{pr_41}
The following statements are equivalent:
	\begin{enumerate}[(i)]
		\item $F$ is an $(R\times \cb{0})$-minimal face of $K$,
		\item There exists $w \in \Gamma(F)$ such that $c^T w < 0$.
	\end{enumerate}	
\end{proposition}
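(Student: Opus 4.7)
\medskip

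\noindent\textbf{Plan.} I would prove the equivalence by establishing the two implications separately: (ii)$\Rightarrow$(i) will follow from a direct calculation with the definitions, while (i)$\Rightarrow$(ii) will be handled by contrapositive together with a polar-cone computation that exploits the polyhedrality of $K$.

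For (ii)$\Rightarrow$(i), let $w \in \Gamma(F)$ satisfy $c^T w < 0$. Then $w \in K^* = K^\circ$, so $w^T z \leq 0$ for every $z \in K$, and by definition of $\Gamma$, $w^T y = 0$ for every $y \in F$. For an arbitrary $y \in F$ and $\lambda > 0$, one computes $w^T (y - \lambda c) = -\lambda c^T w > 0$, which precludes $y - \lambda c \in K$. Since $R \times \cb{0} = \R_+ c$, this shows that each $y \in F$ is $(R \times \cb{0})$-minimal in $K$, so $F$ is an $(R \times \cb{0})$-minimal face.

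For (i)$\Rightarrow$(ii), I argue by contrapositive: assuming $c^T w \geq 0$ for every $w \in \Gamma(F)$, equivalently $-c \in \Gamma(F)^\circ$, I construct a point of $F$ violating $(R\times\cb{0})$-minimality. Writing $\Gamma(F) = K^* \cap F^\bot$ with $F^\bot = \of{\Span F}^\bot$, polyhedrality of $K^*$ and of the subspace $F^\bot$ makes their polar sum polyhedral and hence closed, so the polar-sum identity $(A \cap B)^\circ = A^\circ + B^\circ$ applies without any closure operation, yielding $\Gamma(F)^\circ = \of{K^*}^\circ + \of{F^\bot}^\circ = K + \Span F$. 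A nonempty face of a convex cone is itself a convex cone containing the origin, so $\Span F = F - F$ and $K + F = K$; consequently $K + \Span F = K - F$. Writing $-c = z - y$ with $z \in K$ and $y \in F$ gives $y - c = z \in K$, exhibiting a point $y \in F$ with $y - \lambda c \in K$ at $\lambda = 1$, which refutes (i).

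The main obstacle is justifying the polar identity $\Gamma(F)^\circ = K + \Span F$ without a closure operation, since for arbitrary closed convex cones one has only $(A \cap B)^\circ = \cl\of{A^\circ + B^\circ}$. It is precisely the polyhedrality of $K^*$ and $F^\bot$ that renders their polar sum polyhedral, hence closed, and lets the identity go through. A secondary point worth verifying is that every nonempty face of the convex cone $K$ is itself a convex cone containing $0$, which underlies the simplifications $\Span F = F - F$ and $K + F = K$ used in collapsing $K + \Span F$ to $K - F$.
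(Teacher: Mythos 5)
Your proof is correct, and while the direction (ii)$\Rightarrow$(i) coincides with the paper's argument (the same one-line computation $w^T(y-\lambda c)=-\lambda c^Tw>0$), your direction (i)$\Rightarrow$(ii) takes a genuinely different route. The paper argues as follows: since $c\in 0_+\P\times\cb{0}\subseteq K$ and $\Gamma(F)\subseteq K^\circ$, the failure of (ii) forces $w^Tc=0$ for every $w\in\Gamma(F)$, whence $c\in\Gamma^{-1}(\Gamma(F))=F$ by Proposition \ref{pgamma}, and $c-\eps c\in K$ then contradicts minimality. You instead avoid Proposition \ref{pgamma} entirely: you identify $\Gamma(F)=K^*\cap(\Span F)^\bot$, invoke the polyhedral polar-sum identity to get $\Gamma(F)^\circ=K+\Span F=K-F$, and read off from $-c\in K-F$ a non-minimal point $y\in F$ with $y-c\in K$. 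Your computation is sound (a nonempty face of a cone is indeed a cone containing $0$, and polyhedrality does make $K+\Span F$ closed, so no closure operator is needed), and it has the virtue of being self-contained and of not even requiring the observation $c\in K$ that the paper needs; the price is that you redo, in the special case $\Gamma^{-1}(\Gamma(F))\supseteq$-type reasoning, work that Proposition \ref{pgamma} already packages, so the paper's version is shorter in context. One small point worth making explicit in your write-up: both statements presuppose that $F$ is a nonempty face of $K$, which you use when you pick $y\in F$ in the decomposition $-c=z-y$.
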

\begin{proof}
Note first that $c \in 0_+\P \times\cb{0} \subseteq K$. Since $R\times\cb{0} = \cone\!\cb{c}$, (i) is equivalent to 
\begin{equation}\label{eq_41}
   \forall \varepsilon > 0,\; \forall y \in F:\; y - \varepsilon c \not\in K.
\end{equation}
\eqref{eq_41} $\Rightarrow$ (ii): There exists $w \in \Gamma(F)$ such that $w^T c < 0$ since otherwise,  by Proposition \ref{pgamma}, we obtain $c \in F$, which contradicts \eqref{eq_41}. (ii) $\Rightarrow$ \eqref{eq_41}: Let $y \in F$ and $\varepsilon>0$. Then $w^T(y-\varepsilon c) = -\varepsilon c^T w > 0$, i.e., $y-\varepsilon c \not\in K$.	
\end{proof}

\begin{proposition}\label{pr_42}
The following statements are equivalent:
	\begin{enumerate}[(i)]
		\item $F^*$ is an $(M(R^*\times \cb{0}))$-maximal face of $K^*$,
		\item There exists $y \in \Gamma^{-1}(F^*)$ such that ${c^*}^T y < 0$.
	\end{enumerate}	
\end{proposition}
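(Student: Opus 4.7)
My plan is to follow the strategy of Proposition \ref{pr_41} with $c$ replaced by $-c^*$ and $K$ by $K^*$, but with one subtle adjustment in the contradiction step. First I would verify the identity $M(R^* \times \{0\}) = \cone\{-c^*\}$ by direct calculation: the single generator of $R^* \times \{0\} \subseteq \R^{q+1}$ is $(0,\dots,0,1,0)^T$ with the $1$ in position $q$, and reading off the rows of $M$ in \eqref{eq_m} gives $M(0,\dots,0,1,0)^T = (0,\dots,0,1)^T = -c^*$. Consequently, (i) is equivalent to the statement $w - \varepsilon c^* \notin K^*$ for every $w \in F^*$ and every $\varepsilon > 0$.

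The next, and crucial, observation is that $c^* \in K^*$ -- this is the analogue of ``$c \in K$'' used in Proposition \ref{pr_41}. It follows immediately from Proposition \ref{p21} together with \eqref{eq_phi}: every $y \in K$ satisfies $y_{q+1} \geq 0$, so ${c^*}^T y = -y_{q+1} \leq 0$, whence $c^* \in K^\circ = K^*$.

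The implication (ii) $\Rightarrow$ (i) is then straightforward: given $y \in \Gamma^{-1}(F^*)$ with ${c^*}^T y < 0$, for any $w \in F^*$ and $\varepsilon > 0$ one computes $(w - \varepsilon c^*)^T y = w^T y - \varepsilon {c^*}^T y = -\varepsilon {c^*}^T y > 0$, and since $y \in K$ this forces $w - \varepsilon c^* \notin K^*$. For (i) $\Rightarrow$ (ii) I would argue by contradiction: suppose ${c^*}^T y \geq 0$ for all $y \in \Gamma^{-1}(F^*) \subseteq K$; combining this with $c^* \in K^*$ (which gives the reverse inequality) forces ${c^*}^T y = 0$ on $\Gamma^{-1}(F^*)$, and Proposition \ref{pgamma} then yields $c^* \in \Gamma(\Gamma^{-1}(F^*)) = F^*$. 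Since $F^*$ is a nonempty face of the convex cone $K^*$ it is itself a convex cone, so choosing $w := c^* \in F^*$ and $\varepsilon := \tfrac12$ gives $w - \varepsilon c^* = \tfrac12 c^* \in F^* \subseteq K^*$, contradicting the reformulation of (i).

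The step I expect to require the most care is the asymmetry with Proposition \ref{pr_41}: the naive dual of ``$c \in K$'' would be ``$-c^* \in K^*$'' (the generator of $M(R^* \times \{0\})$), and this typically fails whenever $S \neq \emptyset$. Instead the contradiction has to be routed through $c^* \in K^*$, which is always available from the nonnegativity of the last coordinate on $K$.
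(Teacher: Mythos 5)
Your proof is correct and follows exactly the route the paper intends: the paper computes $M(R^*\times\cb{0})=\cone\cb{-c^*}$ and then declares the rest ``analogous to Proposition \ref{pr_41}'', which is precisely the argument you carry out in full (reformulating (i) as $w-\eps c^*\notin K^*$ for all $w\in F^*$, $\eps>0$, and running the same two implications through Proposition \ref{pgamma}). Your observation that the correct analogue of $c\in K$ is $c^*\in K^*$ rather than $-c^*\in K^*$ (which indeed fails once $S\neq\emptyset$) is the one detail the paper leaves implicit, and you handle it correctly.
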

\begin{proof}
We have $M(R^* \times\cb{0}) = \cone (M(\{-p^*(c^*)\} \times \{0\})) = \cone \cb{-c^*}$. Thus (i) is equivalent to 
$$ \forall \varepsilon > 0,\; \forall w \in F^*:\; w - \varepsilon c^* \not\in K^*.$$
The remaining arguments are analogous to those in Proposition \ref{pr_41}.	
\end{proof}

\begin{corollary}\label{cor_42}
The following statements are equivalent:	
	\begin{enumerate}[(i)]		
		\item $F^*$ is an $R^*$-maximal face of $\D$.
		\item $(\Phi^{-1} \circ \Gamma^{-1} \circ M\circ\Phi)(F^*)$ is an $R$-minimal face of $\P$,
	\end{enumerate}
\end{corollary}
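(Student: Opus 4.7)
The plan is to transport the maximality/minimality structure along the chain $\Psi=\Phi^{-1}\circ\Gamma^{-1}\circ M\circ\Phi$ one link at a time, using Proposition~\ref{p2} for $\Phi$, the bijectivity of $M$, and Propositions~\ref{pr_41} and \ref{pr_42} to convert cone-maximality/minimality into inner-product conditions. The bridge between the two Propositions is the identity $c^T M=(c^*)^T$, which is a direct consequence of the block form of $M$ in \eqref{eq_m} together with the normalization $c_q=1$.

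First I would prove two transport claims. Claim~A: $F^*$ is $R^*$-maximal in $\D$ iff $M\Phi(F^*)$ is $M(R^*\times\{0\})$-maximal in $K^*$. Using \eqref{eq_phi}, this amounts to checking on the two parts of $\Phi(\D)$: at a finite point $(w,1)$ an $R^*$-violation translates directly to one at $w\in F^*$, while at a recession point $(z,0)$ one combines with some $w\in F^*$ and uses $F^*+0_+F^*\subseteq F^*$ to get the same conclusion. The passage through $M$ is immediate from bijectivity. Claim~B (primal analog): provided $\Psi(F^*)\neq\emptyset$, Proposition~\ref{p2} gives $\Phi\Psi(F^*)=\Gamma^{-1}M\Phi(F^*)$, and the same kind of finite/recession case analysis applied to $\P$ shows that $\Psi(F^*)$ is $R$-minimal in $\P$ iff $\Gamma^{-1}M\Phi(F^*)$ is $(R\times\{0\})$-minimal in $K$. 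Proposition~\ref{pr_42} then rewrites (i) as ``$\exists\, y\in\Gamma^{-1}M\Phi(F^*)$ with $(c^*)^Ty<0$'', and Proposition~\ref{pr_41} rewrites (ii) as ``$\exists\, w\in M\Phi(F^*)$ with $c^T w<0$''.

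To close the loop I exploit $c^T M=(c^*)^T$ together with the explicit forms of $K$ and $K^*$: for $y\in K$ the condition $(c^*)^Ty=-y_{q+1}<0$ forces $y=\lambda(v,1)$ with $v\in\Phi^{-1}\Gamma^{-1}M\Phi(F^*)=\Psi(F^*)$, and conversely any $v\in\Psi(F^*)$ produces $w:=M(v,1)\in M\Phi(F^*)$ with $c^T w=(c^*)^T(v,1)=-1<0$. So from (i) one extracts $v\in\Psi(F^*)$ via Proposition~\ref{pr_42}, then feeds it back to produce a witness for the Proposition~\ref{pr_41} condition, and concludes (ii) by Claim~B. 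The reverse implication runs the same construction in the other direction, using any $v\in\Psi(F^*)$ to supply the witness $(v,1)$ for Proposition~\ref{pr_42}. The main obstacle is precisely this asymmetry between the two Proposition conditions: they are not literally the same statement (one lives in $\Gamma^{-1}M\Phi(F^*)\subseteq K$, the other in $M\Phi(F^*)\subseteq K^*$), so one must route the argument through the nonemptiness of $\Psi(F^*)$, which has to be extracted from each hypothesis in turn, to match them up.
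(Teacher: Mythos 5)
Your overall route is the paper's: transport the $R^*$/$R$ order structure through $\Phi$ and $M$ (your Claims~A and~B, which the paper leaves implicit in its appeal to Proposition~\ref{p2}), then use Propositions~\ref{pr_41} and~\ref{pr_42} to trade maximality on the $K^*$ side for minimality on the $K$ side. The one step that is wrong as written is the ``conversely'' half of your loop-closing argument: for $v\in\Psi(F^*)$ you claim $w:=M(v,1)\in M\Phi(F^*)$. This cannot be right. $\Psi(F^*)$ is a face of $\P$, so $(v,1)$ lies in $K$ and $M(v,1)$ is the $M$-image of a point of $K$; it has nothing to do with the face $M\Phi(F^*)$ of $K^*$, which is generated by $M(F^*\times\cb{1})$ with $F^*\subseteq\D$. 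You have conflated the primal face $\Psi(F^*)$ with the dual face $F^*$. The repair is immediate and shows that the detour through $\Psi(F^*)$ is unnecessary for this conjunct: the witness for condition (ii) of Proposition~\ref{pr_41} is $w=M(w^*,1)$ for \emph{any} $w^*\in F^*$, which lies in $M(F^*\times\cb{1})\subseteq M\Phi(F^*)$ and satisfies $c^Tw=(c^*)^T(w^*,1)^T=-1<0$ by your identity $c^TM=(c^*)^T$. So that condition is automatic from $F^*\neq\emptyset$ and does not need to be ``fed back'' from anything.

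With that correction your argument is essentially the paper's proof in a less symmetric packaging. The paper observes that (i) is equivalent to the conjunction of (ii) of Proposition~\ref{pr_41} and (i) of Proposition~\ref{pr_42} --- the first conjunct being precisely the condition $M\Phi(F^*)\not\subseteq M(0_+\D\times\cb{0})$ from Proposition~\ref{p2} --- while (ii) is equivalent to the conjunction of (i) of Proposition~\ref{pr_41} and (ii) of Proposition~\ref{pr_42}; the two propositions then swap both conjuncts simultaneously. Your version has the merit of spelling out the transport lemmas (Claims~A and~B) that the paper glosses over, but the asymmetric one-link-at-a-time chaining is exactly what led you into the misplaced witness. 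Finally, in the direction (ii)~$\Rightarrow$~(i) you must read (ii) as asserting in particular that $\Psi(F^*)$ is a \emph{nonempty} face (as is implicit in the geometric duality setting, where only proper faces are considered); otherwise the point $v$ you need cannot be extracted.
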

\begin{proof}
	Using Proposition \ref{p2}, we see that (i) is equivalent to $\tilde F^* := (M \circ \Phi)(F^*)$ being a $(M(R^* \times \cb{0}))$-maximal face of $K^*$ such that $\tilde F^* \not\subseteq M(0_+\D \times \cb{0})$. This is equivalent to (ii) in Proposition \ref{pr_41} and (i) in Proposition \ref{pr_42} for $F^* = \Gamma(F)$. Hence this is equivalent to (i) in Proposition \ref{pr_41} and (ii) in Proposition \ref{pr_42}. This means that $\tilde F:= (\Gamma^{-1} \circ M \circ \Phi)(F^*)$ is an $(R\times\cb{0})$-minimal face of $K$ and we have $\tilde F \not\subseteq 0_+\P\times \cb{0}$. By Proposition \ref{p2} this is equivalent to (ii).	
\end{proof}

 Now, \eqref{eq_dm} can be verified by a straightforward calculation. Combining the results of Propositions \ref{p2} and \ref{pgamma} and Corollary \ref{cor_42} we complete the proof of the geometric duality theorem.

\bibliographystyle{abbrv}
\bibliography{database}

\end{document}